\documentclass[12pt]{amsart}
\usepackage{amssymb,amsmath,amsthm,mathrsfs,multirow,enumerate}

\usepackage{graphicx} 
\usepackage[all]{xy}
\usepackage{braket}


\numberwithin{equation}{section} 

\theoremstyle{plain}
\newtheorem{thm}{Theorem}[section]
\newtheorem{cor}[thm]{Corollary}
\newtheorem{lemma}[thm]{Lemma}

\theoremstyle{definition}

\newtheorem{ex}{Example}

\newtheorem*{rmk}{Remark}

\newcommand{\diag}{\mathop{\mathrm{diag}}}

\newcommand{\tr}{\mathrm{Tr}}

\newcommand{\bbm}{\begin{bmatrix}}
\newcommand{\ebm}{\end{bmatrix}}
\DeclareMathOperator{\Aut}{\rm Aut}

\DeclareMathOperator{\cha}{\rm char}


\begin{document}

\title[Orthogonal abelian Cartan subalgebra decomposition]{Orthogonal abelian Cartan subalgebra decomposition of $\mathfrak{sl}_n$ over a finite commutative ring}
\author{Songpon Sriwongsa and Yi Ming Zou}

\address{Songpon Sriwongsa\\ Department of Mathematical Sciences \\ University of Wisconsin-Milwaukee\\ USA}
\email{\tt songpon@uwm.edu}

\address{Yi Ming Zou\\ Department of Mathematical Sciences \\ University of Wisconsin-Milwaukee\\ USA}
\email{\tt ymzou@uwm.edu}

\keywords{Abeian Cartan subalgebras; Killing form; Local ring; Orthogonal decomposition.}

\subjclass[2010]{Primary: 17B50; Secondary: 13M05}

\maketitle

\begin{abstract}
Orthogonal decomposition of the special linear Lie algebra over the complex numbers was studied in the early 1980s and attracted further attentions in the past decade due to its application in quantum information theory. In this paper, we study this decomposition problem of the special linear Lie algebra over a finite commutative ring with identity.
\end{abstract}

\section{Introduction}\label{intro}
Let $\mathfrak{L}$ be a Lie algebra over $\mathbb{C}$. An {\it orthogonal decomposition} (OD) of $\mathfrak{L}$ is a decomposition of $\mathfrak{L}$ into a direct sum of Cartan subalgebras which are pairwise orthogonal with respect to the Killing form. Orthogonal decompositions of Lie algebras were studied as early as in \cite{T76} by Thompson and used for the construction of a special finite simple group. The theory of such decompositions of simple Lie algebras of types $A, B, C$ and $D$ over $\mathbb{C}$ was developed by Kostrikin and collaborators in the 1980s \cite{KK81, KK83, KT94}. The OD problem of $\mathfrak{sl}_n(\mathbb{C})$ is related to other fields such as mutually unbiased bases (MUBs) in $\mathbb{C}^n$ which have applications in information theory \cite{DE10,R09}. Boykin et. al.  established a connection between the problem of constructing maximal collections of MUBs and the existence problem of OD of $\mathfrak{sl}_n(\mathbb{C})$ \cite{BS07}. It was conjectured in \cite{KK81}, the so-called Winnie-the-Pooh conjecture, that the Lie algebra $\mathfrak{sl}_n(\mathbb{C})$ has an OD if and only if $n$ is a power of a prime integer. This would imply the nonexistence of $n+1$ MUBs in the $n$-dimensional complex space when $n$ is not a prime power \cite{BS07}. The only if part of the conjecture is still open. On the other hand, if $n$ is a composite number which is not a prime power, the maximum collection of pairwise orthogonal Cartan subalgebras of $\mathfrak{sl}_n(\mathbb{C})$ is unknown. This is the case even when $n$ is the first positive composite number $6$. For some more recent developments when $n = 6$, see \cite{BZ16}.

In this paper, we consider the OD problem of the special linear Lie algebra $\mathfrak{sl}_n$ over a finite commutative ring $R$ with identity. One of our motivations was to see if we can shed more light on the OD problem in the none prime power case such as the case of $n = 6$ by considering the problem over finite commutative rings, since detailed computations are possible for small commutative rings.
Lie algebras over $R$ are modular Lie algebras. These Lie algebras, in particular when $R$ is a field of positive characteristic, have arisen in other areas of mathematics. For more informations, we refer the reader to \cite{S67} and the references therein. 

 Let $\mathfrak{L}$ be a Lie algebra over $R$. Recall that a subalgebra $H$ of $\mathfrak{L}$ is a {\it Cartan subalgebra} if it is a nilpotent subalgebra which is its own normalizer. In contrast to Lie algebras over the complex numbers, where every Cartan subalgebra is abelian, in the modular case, not every Cartan subalgebra is abelian. Here, we consider the orthogonal decomposition of 
 \[
 \mathfrak{sl}_n(R) = \{ n \times n \text{ traceless matrices over } R \}
 \]
 into abelian Cartan subalgebras and use the abbreviation ODAC (AC for abelian Cartan). The orthogonality is defined via the Killing form:
$
 K(A, B) := Tr(adA \cdot adB)
 $
and  
\[
 K(A, B) = 2nTr (AB)
\]
if $A, B \in \mathfrak{sl}_n(R)$. Thus, an ODAC of $\mathfrak{sl}_n(R)$ is a decomposition
\[
\mathfrak{sl}_n(R) = H_0 \oplus H_1 \oplus \ldots \oplus H_k
\]
where the $H_i$'s are pairwise orthogonal abelian Cartan subalgebra of $\mathfrak{sl}_n(R)$.
\begin{ex}\label{two}
Assume that   $2 \nmid \cha(R) $. Then $ \mathfrak{sl}_2(R)$ has an ODAC
\[
\mathfrak{sl}_2(R) =\bigg \langle 
\begin{pmatrix}
1 & 0 \\
0 & -1
\end{pmatrix}
\bigg \rangle_R
\oplus
\bigg \langle 
\begin{pmatrix}
0 & 1 \\
-1 & 0
\end{pmatrix}
\bigg \rangle_R
\oplus
\bigg \langle 
\begin{pmatrix}
0 & 1 \\
1 & 0
\end{pmatrix}
\bigg \rangle_R.
\]
\end{ex}

In Section \ref{small}, we first consider a special type of ODAC (the so-called classical type) of the cases $n = 2, 3$ over a finite field. The observations in these special cases will then be used in Section \ref{main} to derive the main results for $n \geq 2$, these results provide sufficient conditions for the existence of an ODAC of $\mathfrak{sl}_n$ over a finite commutative ring with identity. In the cases of a finite local ring and a finite field, the verifications of these conditions are straight forward for the given ring and field since the needed information is readily obtained from their structures. In Section \ref{sum}, we conclude the paper with some comments.
                    
\section{ODAC for $n = 2, 3$ when $R$ is a finite field}\label{small}
 
Suppose that $n = 2, 3$ and $\mathbb{F}_q$ is a finite field of $q = p^m$ elements with characteristic $p \neq 2, 3$. We recall \cite{S67} that  a Lie algebra $\mathfrak{L}$ over $\mathbb{F}_q$ is {\it classical} if:
\begin{enumerate}[(i)]
	\item the center of $\mathfrak{L}$ is zero;
	\item $[\mathfrak{L}, \mathfrak{L}] = \mathfrak{L}$;
	\item $\mathfrak{L}$ has a abelian Cartan subalgebra $H$, relative to which:
	\begin{enumerate}[(a)]
		\item $\mathfrak{L} = \oplus \mathfrak{L}_\alpha$, where $[x, h] = \alpha(h)x$ for all $x \in \mathfrak{L}_\alpha, h \in H$;
		\item if $\alpha \neq 0$ is a root, $[\mathfrak{L}_\alpha, \mathfrak{L}_{-\alpha}]$ is one-dimensional;
		\item if $\alpha$ and $\beta$ are roots, and if $\beta \neq 0$, then not all $\alpha + k\beta$ are roots, where $1 \leq k \leq p - 1$.
	\end{enumerate}
Such an $H$ satisfying (a), (b) and (c) is called a {\it classical} Cartan subalgebra.
\end{enumerate}
An ODAC is said to be {\it classical} if all of its components are classical. 

\begin{ex}\label{twoZ7}
From Example \ref{two},	
 $ \mathfrak{sl}_2(\mathbb{Z}_7)$ has an ODAC
	\[
	\mathfrak{sl}_2(R) =\bigg \langle 
	\begin{pmatrix}
	1 & 0 \\
	0 & -1
	\end{pmatrix}
	\bigg \rangle_{\mathbb{Z}_7}
	\oplus
	\bigg \langle 
	\begin{pmatrix}
	0 & 1 \\
	-1 & 0
	\end{pmatrix}
	\bigg \rangle_{\mathbb{Z}_7}
	\oplus
	\bigg \langle 
	\begin{pmatrix}
	0 & 1 \\
	1 & 0
	\end{pmatrix}
	\bigg \rangle_{\mathbb{Z}_7}.
	\]
	However, it is not classical because $\sqrt{-1}$ is undefined here and so the adjoint action of the second matrix is not semisimple, i.e., $\mathfrak{sl}_2(R)$ does not have a root subspace decomposition relative to the second summand.
\end{ex}

In the finite field case, we can consider classical ODAC of the Lie algebra $\mathfrak{sl}_n$. Assume that $\mathfrak{sl}_n(\mathbb{F}_q)$ is classical, then $\cha(\mathbb{F}_q)$ is not equal to $n$ (otherwise the identity matrix would be in $\mathfrak{sl}_n(\mathbb{F}_q)$ and so the center would be nontrivial), and all its classical Cartan subalgebras are conjugate \cite{S67}. Let $H_0$ be the classical Cartan subalgebra of $\mathfrak{sl}_n{(\mathbb{F}_q)}$ consisting of the diagonal matrices. Note that the conjugation preserves the orthogonality with respect to the Killing form $K$. If $\mathfrak{sl}_n(\mathbb{F}_q)$ has a classical ODAC, then we can assume that one of the components is $H_0$. Let $H$ be a classical Cartan subalgebra of $\mathfrak{sl}_n{(\mathbb{F}_q)}$ orthogonal to $H_0$ with respect to $K$. According to the corollary to Lemma II.1.2 of \cite{S67}, $K$ is nondegenerate, and so is its restriction to $H_0$.
Since $H$ and $H_0$ are conjugate, $K_{|_H}$ is also non-degenerate.

We have the following lemma.

\begin{lemma}\label{H}
	Under the above setting, we have the following.
	\begin{enumerate}[(1)]
		\item If $n = 2$, then 
		\[
		H = \bigg \langle 
		\begin{pmatrix}
		0 & 1 \\
		a & 0
		\end{pmatrix}
		\bigg \rangle_{\mathbb{F}_q} \]
		for some $a \neq 0$.
		\item If $n = 3$, then 
		\[
		H = \Bigg \langle 
		\begin{pmatrix}
		0 & 1 & 0\\
		0 & 0 & a \\
		ab & 0 & 0
		\end{pmatrix},
		\begin{pmatrix}
		0 & 0 & 1\\
		ab & 0 & 0 \\
		0 & b & 0
		\end{pmatrix}
		\Bigg \rangle_{\mathbb{F}_q} \]
		for some $a, b \neq 0$.
	\end{enumerate}	
\end{lemma}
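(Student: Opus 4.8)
The plan is to treat both parts by first turning orthogonality to $H_0$ into a statement about diagonal entries, and then using classicality to pin down the off-diagonal structure. For any $M \in H$ and any traceless diagonal $D \in H_0$ we have $K(M,D) = 2n\,\mathrm{Tr}(MD) = 2n\sum_i M_{ii}D_{ii}$; since this must vanish for every traceless diagonal $D$ and since $\cha\,\mathbb{F}_q = p$ divides neither $2$ nor $3$, I would conclude that every element of $H$ has zero diagonal (the vector $(M_{11},M_{22},M_{33})$ is forced proportional to $(1,\dots,1)$, hence zero by tracelessness). For $n=2$ this already does most of the work: $H=\langle M\rangle$ is one-dimensional, so $M=\left(\begin{smallmatrix}0&y\\z&0\end{smallmatrix}\right)$, and classicality forces $M$ to be regular semisimple, ruling out the nilpotent cases $y=0$ or $z=0$; rescaling $M$ by $y^{-1}$ then gives part (1) with $a=z/y\neq 0$.

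For $n=3$ the diagonal-vanishing reduces the problem to understanding which split maximal tori of $\mathfrak{sl}_3(\mathbb{F}_q)$ consist entirely of zero-diagonal matrices. Since $H$ is classical it is conjugate to $H_0$, hence its elements are simultaneously diagonalizable over $\mathbb{F}_q$: there is a common eigenbasis $v_1,v_2,v_3\in\mathbb{F}_q^3$ with dual basis $w_1,w_2,w_3$, and each $M\in H$ equals $\sum_k \lambda_k(M)\,\pi_k$ where $\pi_k=v_kw_k^{\mathrm T}$ are the fixed spectral idempotents. The map $M\mapsto(\lambda_1(M),\lambda_2(M),\lambda_3(M))$ is a linear isomorphism from the two-dimensional $H$ onto $\{\lambda:\sum_k\lambda_k=0\}$, so the condition that every $M\in H$ have zero diagonal becomes $\sum_k(\pi_k)_{ii}\lambda_k=0$ for all $\lambda$ with $\sum_k\lambda_k=0$. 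I would read off from this that, for each fixed $i$, the numbers $(\pi_k)_{ii}$ are independent of $k$; since they sum to $(\sum_k\pi_k)_{ii}=1$ and $p\neq 3$, each equals $1/3$. Thus every spectral idempotent has constant diagonal $1/3$.

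The heart of the argument is then to show that an eigenbasis whose spectral idempotents all have diagonal $1/3$ must be a diagonal rescaling of the discrete Fourier basis. Normalizing $(v_k)_1=1$ and writing $v_k=(1,\beta_k,\gamma_k)$, the diagonal condition $(v_k)_i(w_k)_i=1/3$ determines $w_k$, and the duality relations $w_j^{\mathrm T}v_k=0$ for $j\neq k$ become $1+\beta_k/\beta_j+\gamma_k/\gamma_j=0$. Comparing the equations for the pairs $(j,k)$ and $(k,j)$ forces $\beta_k/\beta_j$ and $\gamma_k/\gamma_j$ to be the two roots of $x^2+x+1$, i.e.\ primitive cube roots of unity (so in particular $\omega\in\mathbb{F}_q$, consistent with $H$ being split), and checking consistency across all three pairs pins the eigenbasis down to $v_1,v_2,v_3$ being the columns of $\mathrm{diag}(1,\mu,\nu)\,F$, where $F$ is the $3\times3$ Fourier matrix. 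Conjugating the traceless diagonal torus by $F$ yields the zero-diagonal traceless circulant matrices, which are spanned by the cyclic shift matrices $S$ and $S^2$; the further conjugation by $\mathrm{diag}(1,\mu,\nu)$ turns $S,S^2$ into monomial matrices supported on the two off-diagonal cyclic patterns, and after rescaling these are exactly the generators $X,Y$ of part (2), with $a,b\in\mathbb{F}_q$ expressed through $\mu,\nu$. The main obstacle I anticipate is this last computation: organizing the six duality relations so that the cube-root-of-unity structure emerges cleanly, and then verifying that the resulting $H$ is genuinely the span of two $\mathbb{F}_q$-rational monomial matrices of the stated shape rather than merely conjugate over $\overline{\mathbb{F}_q}$ to such a span.
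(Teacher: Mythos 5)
Your proposal is correct, but it takes a genuinely different route from the paper's own proof. The paper never exploits the fact that $H$, being conjugate to $H_0$ over $\mathbb{F}_q$, is simultaneously diagonalizable with a common eigenbasis in $\mathbb{F}_q^3$; it uses the conjugacy only to get nondegeneracy of $K|_H$, and then works entirely at the matrix level: zero diagonal from orthogonality to $H_0$ (same as your first step), then the key assertion that no nonzero element of $H$ has a zero row or column --- proved by contradiction from the entrywise commutativity relations $[A,B]=0$ together with a polynomial-ideal computation (the Sage/Magma code in the Appendix) showing that $abcd=0$ would force $K|_H$ to be degenerate --- and finally commutativity again to normalize a basis $\{A_2,A_3\}$ into the stated monomial form. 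Your spectral-idempotent argument replaces all of that, and it does go through: the constant-diagonal condition $(\pi_k)_{ii}=1/3$ (valid since $p\neq 3$, and it also guarantees every coordinate of every $v_k$ and $w_k$ is nonzero, which is what licenses the normalization $(v_k)_1=1$ and the divisions in the duality relations --- worth making explicit), the relations $r+s=-1$, $rs=1$ forcing $r,s$ to be the two roots of $x^2+x+1$, and the consistency check across the three pairs (your anticipated obstacle) pins $v_1,v_2,v_3$ down to the columns of $\diag(1,\mu,\nu)F$ after a harmless relabeling of $v_2,v_3$; conjugating the shift matrices $S,S^2$ by $\diag(1,\mu,\nu)$ and rescaling by $\mu$ and $\nu$ respectively yields exactly the stated generators with $a=\mu^2/\nu$, $b=\nu^2/\mu$, entirely over $\mathbb{F}_q$, so the rationality worry you flag does not materialize. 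The trade-off between the two approaches is worth noting: your route proves strictly more --- it shows the setting forces a primitive cube root of unity in $\mathbb{F}_q$ (so for $3\nmid(q-1)$ the lemma holds vacuously, and the nonexistence used in the converse of Theorem \ref{sl3} and in the Remark follows at once) and it constrains $a,b$ (e.g.\ $a^2b=\mu^3$ is a cube) --- while the paper's elementary route derives the monomial form using only that $H$ is abelian, orthogonal to $H_0$, and has nondegenerate restricted Killing form, with no appeal to diagonalizability over $\mathbb{F}_q$; that weaker input is precisely what lets the paper's Remark then observe that the resulting matrices are non-diagonalizable when $3\nmid(q-1)$, instead of having that case collapse into vacuity inside the lemma. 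Your $n=2$ argument (classicality makes $\mathrm{ad}\,M$ semisimple, so $M$ nilpotent would force $M$ central, hence zero) is also sound and slightly different from the paper's, which would instead use $K(M,M)=8yz\neq 0$.
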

\begin{proof}
We will give the proof for $n = 3$ since similar arguments apply to the case $n = 2$.
We first prove the following assertions:
	\begin{enumerate}[(a)]
		\item Every matrix in $H$ has a zero  diagonal.
		\item Every nonzero matrix in $H$ has no zero row nor zero column.
		\item $H$ admits a basis $\{ A_2, A_3 \}$ satisfying the conditions below, $k = 2, 3$:
		\begin{enumerate}[(i)]
			\item The first row of $A_k$ has $1$ in the $k$-th position and $0$ elsewhere.
			\item The first position of the $k$-th column of $A_k$ is the only nonzero element in that column.
			\item The $j$-th row of $A_k$ coincides with the $k$-th row of $A_j$.
		\end{enumerate}
	\end{enumerate}
	First note that (a) holds since $H$ is orthogonal to $H_0$ and the characteristic of the field is not equal to $2$ or $3$. If we assume (b), then it follows that $H$ has a basis $\{A_2, A_3\}$ with property (i). We use the commutativity of $H$ to prove (iii). By (i), the $j$-th row of $A_k$ equals the first row of the product $A_jA_k$, but $A_jA_k = A_kA_j$, so it equals the $k$-th row of $A_j$. To prove (ii), we note that for $j \ge 2$, the $j$-th element of the $k$-th column of $A_k$ is the $k$-th element of the $j$-th row of $A_k$, so by (iii), it is equal to the $k$-th element of the $k$-th row of $A_j$, and therefore it is zero by (a). To prove (b), we assume the contrary. Without loss of generality, we may assume that there exists a nonzero matrix $A \in H$ whose first row is zero, i.e. 
	\[A = 
	\begin{pmatrix}
	0 & 0 & 0 \\
	a & 0 & b \\
	c & d & 0
	\end{pmatrix},
	\] 
	where $a, b, c, d$ are not all zero.
 	Note that $H$ has dimension two. Let $B$ be a nonzero matrix such that $H = \big \langle A, B \big \rangle_{\mathbb{F}_q}$. Write
 	\[
 	B =
 	\begin{pmatrix}
 	0 & x & y \\
 	u & 0 & z \\
 	v & w & 0
 	\end{pmatrix},
 	\] 
 	where $x, y, z, u, v,w$ are not all zero, then
 	\begin{align}\label{abe}
 	[A, B] = \begin{pmatrix}
 	-ax-cy  & -dy & -bx \\
 	bv - cz & bw + ax -dz & ay \\
 	du - aw & cx & -bw + cy + dz
 	\end{pmatrix}
 	\end{align}
 	equals zero because $H$ is abelian. If the product  $abcd$ is zero, then it can be verified that the Killing form would be degenerate on $H$. This can be done either by considering different cases or using computational algebra packages. Using the latter method, it is straightforward that the determinant of the Killing form is contained in the ideal $J \subset \mathbb{Z}[a, b, c, d, x, y, z, u, v, w]$ generated by the entries of $[A, B]$ and $abcd$. Codes in both Sage and Magma are provided in the Appendix for this purpose.
    Therefore, all $a, b, c$ and $d$ are nonzero. Now, by (\ref{abe}), $x = y = 0$ and we may assume that $a = 1$. So, we have $bv = cz, bw = dz$ and $du = w$. These can be reduced to $z = b u$ and $v = c u$. Since $B \neq 0$, $u \neq 0$. Again, we may assume that $u = 1$. Then $d = w, b = z$ and $c = v$, i.e., $A = B$, which contradicts the choice of $B$.
 	Therefore, (b) holds. 
 	
 	From the above discussions, $H$ admits a basis of the form 
	\[
	\Bigg\{ \begin{pmatrix}
	0 & 1 & 0\\
	x & 0 & a \\
	* & 0 & 0
	\end{pmatrix},
	\begin{pmatrix}
	0 & 0 & 1\\
	* & 0 & 0 \\
	y & b & 0
	\end{pmatrix} \Bigg\}
	\]
	 where $a, b \neq 0$. Since $H$ is abelian, $x = y = 0$ and $* = ab$.
\end{proof}

The above lemma leads us to the existence of an ODAC of $\mathfrak{sl}_n(\mathbb{F}_q)$, when $n = 2, 3$. For $n = 2$, the decomposition (some cases are classical) always exists because $\cha(\mathbb{F}_q) \neq 2$ (see Example \ref{two}). For $n = 3$, we state the result as a theorem. 

\begin{thm}\label{sl3}
	Let $\mathbb{F}_q$ be a finite field of $q = p^m$ elements with characteristic $p \neq 2, 3$. Then $\mathfrak{sl}_3(\mathbb{F}_q)$ has a classical ODAC if and only if $3 | (q - 1)$.
	In that case, for any primitive cube root of unity $u \in \mathbb{F}_q$, we have the following classical ODAC:
	\[
	\mathfrak{sl}_3(\mathbb{F}_q) = H_0 \oplus H_1 \oplus H_2 \oplus H_3,
	 \]
	where
	\begin{align*}
	 H_1 &=
	\Bigg \langle 
	\begin{pmatrix}
	0 & 1 & 0 \\
	0 & 0 & 1 \\
	1 & 0 & 0
	\end{pmatrix},
	\begin{pmatrix}
	0 & 0 & 1 \\
	1 & 0 & 0 \\
	0 & 1 & 0
	\end{pmatrix}
	\Bigg \rangle_{\mathbb{F}_q},\\
	H_2 &= 
	\Bigg \langle 
	\begin{pmatrix}
	0 & 1 & 0 \\
	0 & 0 & u \\
	u^2 & 0 & 0
	\end{pmatrix},
	\begin{pmatrix}
	0 & 0 & 1 \\
	u^2 & 0 & 0 \\
	0 & u & 0
	\end{pmatrix}
	\Bigg \rangle_{\mathbb{F}_q}, \\
		H_3 &= 
	\Bigg \langle 
	\begin{pmatrix}
	0 & 1 & 0 \\
	0 & 0 & u^2 \\
	u & 0 & 0
	\end{pmatrix},
	\begin{pmatrix}
	0 & 0 & 1 \\
	u & 0 & 0 \\
	0 & u^2 & 0
	\end{pmatrix}
	\Bigg \rangle_{\mathbb{F}_q}.
	\end{align*}

	\end{thm}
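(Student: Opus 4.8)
The plan is to prove both implications by reducing, via the conjugacy of classical Cartan subalgebras, to the normal form supplied by Lemma~\ref{H}. Since $\mathfrak{sl}_3(\mathbb{F}_q)$ is classical, every component of a classical ODAC is conjugate to $H_0$, hence two-dimensional, so a classical ODAC has exactly $8/2 = 4$ components; after conjugating we may take $H_0$ (the diagonal traceless matrices) to be one of them and write the remaining three as $H_1, H_2, H_3$. By Lemma~\ref{H} each $H_i$ has a basis $\{A_i, B_i\}$ with $A_i$ supported on the cyclic positions $(1,2),(2,3),(3,1)$ with weights $(1, a_i, a_ib_i)$ and $B_i$ supported on $(1,3),(2,1),(3,2)$ with weights $(1, a_ib_i, b_i)$, where $a_i, b_i \neq 0$. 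Because $\cha(\mathbb{F}_q) = p \neq 2, 3$ and $K(X,Y) = 6\tr(XY)$, orthogonality with respect to $K$ is the same as the vanishing of $\tr(XY)$.

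For the forward (only if) direction I would compute the trace pairings. A product of two matrices both supported on the positions of $A$ (resp. of $B$) has zero diagonal, so $\tr(A_iA_j) = \tr(B_iB_j) = 0$ automatically, and the only constraints come from the mixed pairings, which evaluate to $\tr(A_iB_j) = a_ib_i + a_jb_j + a_ib_j$. Imposing $\tr(A_iB_j) = \tr(A_jB_i) = 0$ for $i \neq j$ and subtracting gives $a_ib_j = a_jb_i$, so the ratio $\rho := a_i/b_i$ is independent of $i$; feeding this back yields $\rho(b_i^2 + b_ib_j + b_j^2) = 0$, and since $\rho \neq 0$ we get $b_i^2 + b_ib_j + b_j^2 = 0$. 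Hence $t := b_i/b_j$ satisfies $t^2 + t + 1 = 0$, i.e. $t$ is a primitive cube root of unity in $\mathbb{F}_q$; such an element exists precisely when $3 \mid (q-1)$, which proves the only-if part.

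For the converse I would exhibit the explicit $H_0, H_1, H_2, H_3$ of the statement (all of Lemma~\ref{H} type with $a_i = b_i$ and $b_1, b_2, b_3 = 1, u, u^2$) and verify four things. First, each $H_i$ is abelian: a direct computation shows $A_iB_i = b_i^2 I$ is a nonzero scalar matrix, so $[A_i, B_i] = 0$. Second, each $H_i$ is a classical Cartan subalgebra: the generator $A_i$ is a weighted $3$-cycle with $A_i^3 = I$ and distinct eigenvalues $1, u, u^2 \in \mathbb{F}_q$, hence regular semisimple, so the Vandermonde matrix diagonalizing $A_i$ conjugates $H_i$ onto $H_0$; this exhibits $H_i$ as a split maximal torus, classical, and conjugate to $H_0$. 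Third, pairwise orthogonality: $\tr(A_iA_j) = \tr(B_iB_j) = 0$ as above, each $H_i$ is orthogonal to $H_0$ because its matrices have zero diagonal, and $\tr(A_iB_j) = b_i^2 + b_ib_j + b_j^2 = 0$ for $i \neq j$ since $b_i/b_j$ is a primitive cube root of unity. Fourth, the sum is direct: the three vectors $(1, b_i, b_i^2)$ are linearly independent by the Vandermonde determinant $\prod_{i<j}(b_j - b_i) \neq 0$, so $\{A_1, A_2, A_3\}$ and $\{B_1, B_2, B_3\}$ are bases of the two three-dimensional off-diagonal cyclic subspaces, which together with the two-dimensional $H_0$ account for all of $\mathfrak{sl}_3(\mathbb{F}_q)$.

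The main obstacle is the second verification: turning the bare linear-algebraic data $\{A_i, B_i\}$ into a genuine \emph{classical} Cartan subalgebra. This is where the hypothesis $3 \mid (q-1)$ is essential, since it guarantees the eigenvalues $1, u, u^2$ lie in $\mathbb{F}_q$ so that $A_i$ is diagonalizable over the ground field and $H_i$ is conjugate to $H_0$; without the cube roots of unity the torus would not split and the root-space decomposition defining \emph{classical} would fail (compare Example~\ref{twoZ7}). The trace bookkeeping in the first two paragraphs is routine once the cyclic support structure is noted.
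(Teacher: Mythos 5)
Your proposal is correct and follows essentially the same route as the paper: the only-if direction uses Lemma~\ref{H} plus the mixed trace pairings to force $b_i^2 + b_ib_j + b_j^2 = 0$ (the paper's $c^2 + ac + a^2 = 0$ in slightly different variables), contradicting $3 \nmid (q-1)$, and the converse verifies the explicit decomposition and reduces classicality to conjugacy with $H_0$. The only difference is cosmetic: where the paper exhibits explicit conjugating matrices $X$ and $Y$, you deduce conjugacy from $A_i$ being regular semisimple with eigenvalues $1, u, u^2$ (noting $B_i = b_i^2 A_i^2$ lies in the centralizer), which is an equally valid and arguably cleaner justification of the same step.
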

\begin{proof}
Assume that $3 | (q - 1)$. Since the unit group $\mathbb{F}_q^\times$ is cyclic and $|\mathbb{F}_q^\times| = q - 1$, there exists a primitive cube root of unity $u \in \mathbb{F}_q$.	
The verification that the given decomposition is an ODAC of $\mathfrak{sl}_3(\mathbb{F}_q)$ is straightforward. Let
\[
X = 
\begin{bmatrix}
1 & 1 & u \\
u & 1 & 1 \\
1 & u & 1
\end{bmatrix}
\text{ \ and \ }
Y = 
\begin{bmatrix}
u & u & 1 \\
u & 1 & u \\
u & u^2 & u^2
\end{bmatrix}.
\]
Then both $X$ and $Y$ are nonsingular. Note that the conjugation by $X$ (resp. $X^2$), changes $H_2$ (resp. $H_3$) to $H_0$, and the conjugation by $Y$ changes
$H_1$  to $H_0$. Since $H_0$ is a classical Cartan subalgebra, so are $H_1, H_2$ and $H_3$. Thus, this decomposition is classical. 

Conversely, suppose that $3 \nmid (q - 1) $ but $\mathfrak{sl}_3(\mathbb{F}_q)$ possesses a classical ODAC. Note that the decomposition of $\mathfrak{sl}_3(\mathbb{F}_q)$ has $4$ components. Then, up to conjugacy, we can assume that $H_0$ is one of the components and, by Lemma \ref{H}, all other components are of the forms 
\begin{align*}
H'_1 &= \Bigg \langle 
\begin{pmatrix}
0 & 1 & 0\\
0 & 0 & a \\
ab & 0 & 0
\end{pmatrix},
\begin{pmatrix}
0 & 0 & 1\\
ab & 0 & 0 \\
0 & b & 0
\end{pmatrix}
\Bigg \rangle_{\mathbb{F}_q} \\
H'_2 &= \Bigg \langle 
\begin{pmatrix}
0 & 1 & 0\\
0 & 0 & c \\
cd & 0 & 0
\end{pmatrix},
\begin{pmatrix}
0 & 0 & 1\\
cd & 0 & 0 \\
0 & d & 0
\end{pmatrix}
\Bigg \rangle_{\mathbb{F}_q} \\
H'_3 &= \Bigg \langle 
\begin{pmatrix}
0 & 1 & 0\\
0 & 0 & e \\
ef & 0 & 0
\end{pmatrix},
\begin{pmatrix}
0 & 0 & 1\\
ef & 0 & 0 \\
0 & f & 0
\end{pmatrix}
\Bigg \rangle_{\mathbb{F}_q}
\end{align*}
for some $a, b, c, d, e, f \neq 0$. By the orthogonality between $H'_1$ and $H'_2$, we have 
$
cd + ad + ab = 0 \text{ and } cd + cb + ab = 0.
$
Then
$d = a^{-1}cb$. Substituting $d$ in the first equation, we get
$
c^2 + a c + a^2 = 0.
$
However, since  $3 \nmid (q - 1)$, there is no primitive cube root of unity in $\mathbb{F}_q$, so the polynomial $x^2 + a x + a^2$ has no root in $\mathbb{F}_q$. This is a contradiction.
\end{proof}

\begin{rmk}
By the above theorem, if $\mathbb{F}_q$ does not have a primitive cube root of unity, then the number of pairwise orthogonal Cartan subalgebras in $\mathfrak{sl}_3(\mathbb{F}_q)$ is at most two. If $H_0$ and $H'_1$ is such a pair, then they must have the forms described in the theorem, and by \cite{S67}, $H_0$ and $H'_1$ are conjugate. However, the two matrices listed in $H'_1$ listed are not diagonalizable over $\mathbb{F}_q$, so there is no orthogonal pair of classical Cartan subalgebras in $\mathfrak{sl}_3(\mathbb{F}_q)$ in this case.
\end{rmk}

\section{Main results}\label{main}

We remark that every matrix described in Theorem \ref{sl3} is a product of a diagonal matrix and a permutation matrix. Let $u$ be a primitive cube root of unity and let

\begin{align*}
D =
\begin{pmatrix}
1 & 0 & 0 \\
0 & u & 0 \\
0 & 0 & u^2
\end{pmatrix}
\ \text{and} \
P =
\begin{pmatrix}
0 & 0 & 1 \\
1 & 0 & 0 \\
0 & 1 & 0
\end{pmatrix},
\end{align*}
then each matrix in Theorem \ref{sl3} is of the form $D^a P^b$ for some $a, b \in \{0, 1, 2\}$. We show that an ODAC of $\mathfrak{sl}_n(R)$ can be constructed under assumptions similar to the $n = 3$ case using the $n \times n$ version of matrices $D$ and $P$. 

The matrices $D$ and $P$ play a key role in the construction of OD for $\mathfrak{sl}_n(\mathbb{C})$ when $n = p^m$ for a prime integer $p$ and a positive integer $m$ \cite{KK81}. To use them in our construction here, some of the differences must be noted. The matrix $D$ requires the existence of a primitive $p$th root $u$ of unity, which always exists in the complex number case. But for a general finite commutative ring, the existence of $u$ needs to be assumed. Moreover, $u^{p - 1} + \ldots + u + 1 = 0$ holds in $\mathbb{C}$, but this may not hold in a general finite commutative ring unless $u - 1$ is a unit. In addition, one can use Lie's theorem to verify that the constructed decomposition is an OD in the complex number case \cite{KK81}, but Lie's theorem is not available in the general cases considered here.

\begin{thm}\label{commu}
	Let $R$ be a finite commutative ring with $1$. For a prime power $n = p^m$, if there exists a primitive $p$th root of unity $u \in R^\times$ such that $u - 1 \in R^\times$, then $\mathfrak{sl}_n(R)$ has an ODAC 
	\[
	\mathfrak{sl}_n(R) = H_\infty \oplus H_0 \oplus H_1 \oplus \ldots \oplus H_{n - 1}.
	\]
\end{thm}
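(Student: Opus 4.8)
The plan is to imitate the Heisenberg--Weyl (``clock and shift'') construction of Kostrikin--Kostrikin, but to carry out every step over $R$ rather than $\mathbb{C}$, organizing the matrices by symplectic linear algebra over $\mathbb{F}_p$. Write $V=\mathbb{F}_p^{2m}$ and identify $R^n$ with the free module on a basis $\{e_{\mathbf{x}}:\mathbf{x}\in\mathbb{F}_p^m\}$. For $v=(\mathbf{a},\mathbf{b})\in V$ I would set $J_v=D_{\mathbf{a}}P_{\mathbf{b}}$, where $D_{\mathbf{a}}e_{\mathbf{x}}=u^{\mathbf{a}\cdot\mathbf{x}}e_{\mathbf{x}}$ is the diagonal ``clock'' and $P_{\mathbf{b}}e_{\mathbf{x}}=e_{\mathbf{x}+\mathbf{b}}$ is the ``shift''; these are the $n\times n$ incarnations of $D$ and $P$. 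A direct computation gives the multiplication rule $J_vJ_{v'}=u^{c(v,v')}J_{v+v'}$ for an explicit bilinear $c$, whence the commutator $[J_v,J_{v'}]=(u^{c(v,v')}-u^{c(v',v)})\,J_{v+v'}$, where $c(v,v')-c(v',v)=\langle v,v'\rangle$ is the standard symplectic form on $V$.

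Before exploiting these I would record the ring-theoretic consequences of the hypothesis. Writing the finite commutative ring $R$ as a product of local rings and reducing modulo each maximal ideal, the assumption that $u$ is a primitive $p$th root with $u-1\in R^\times$ forces every residue characteristic to be $\neq p$; from this one gets $1+u+\cdots+u^{p-1}=0$, that $u^{j}-1\in R^\times$ for $1\le j\le p-1$ (equivalently $u^{k}=1\iff p\mid k$), and that $p$, hence $n=p^m$, is a unit of $R$. Three payoffs follow. First, by a geometric-sum identity $\mathrm{Tr}(J_v)=0$ for every $v\neq 0$ while $\mathrm{Tr}(J_0)=\mathrm{Tr}(I)=n$, so each $J_v$ with $v\neq0$ lies in $\mathfrak{sl}_n(R)$. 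Second, because $n\in R^\times$ the discrete Fourier inversion expressing the elementary matrices through the $J_v$ is valid, so $\{J_v:v\in V\}$ is an $R$-basis of $M_n(R)$ and $\{J_v:v\neq0\}$ is an $R$-basis of $\mathfrak{sl}_n(R)$. Third, the commutator coefficient $u^{c(v,v')}-u^{c(v',v)}$ is a unit of $R$ exactly when $\langle v,v'\rangle\neq0$, and is $0$ otherwise.

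Next I would fix a symplectic spread of $V$: identifying $V\cong\mathbb{F}_{p^m}\oplus\mathbb{F}_{p^m}$ with form $\langle(x,y),(x',y')\rangle=\mathrm{Tr}_{\mathbb{F}_{p^m}/\mathbb{F}_p}(xy'-x'y)$, the $n+1$ Lagrangians $W_\infty=\{(0,y)\}$ and $W_\lambda=\{(x,\lambda x)\}$ ($\lambda\in\mathbb{F}_{p^m}$) partition $V\setminus\{0\}$ and pairwise meet only in $0$. Indexing them $W_\infty,W_0,\dots,W_{n-1}$ and setting $H_W=\mathrm{span}_R\{J_v:v\in W\setminus\{0\}\}$, the basis fact above makes $\mathfrak{sl}_n(R)=H_\infty\oplus H_0\oplus\cdots\oplus H_{n-1}$ a genuine $R$-module direct sum. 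Each $H_W$ will be abelian (hence nilpotent) since $W$ is isotropic, so all brackets $[J_v,J_{v'}]$ with $v,v'\in W$ vanish. For orthogonality, if $v\in W\setminus\{0\}$ and $v'\in W'\setminus\{0\}$ with $W\neq W'$ then $v+v'\neq0$ (else $-v\in W\cap W'=\{0\}$), so $\mathrm{Tr}(J_vJ_{v'})=u^{c(v,v')}\mathrm{Tr}(J_{v+v'})=0$ and $K(J_v,J_{v'})=2n\,\mathrm{Tr}(J_vJ_{v'})=0$, which extends by bilinearity to $H_W\times H_{W'}$. To see that each $H_W$ is Cartan it remains to check it is self-normalizing: if $X=\sum_{v\neq0}c_vJ_v$ normalizes $H_W$, then for each $w\in W\setminus\{0\}$ the bracket $[X,J_w]=\sum_{v\notin W}c_v(u^{c(v,w)}-u^{c(w,v)})J_{v+w}$ has all its indices $v+w$ outside $W$, so membership in $H_W$ forces every coefficient to vanish; since each $v\notin W=W^\perp$ admits some $w\in W$ with $\langle v,w\rangle\neq0$, that coefficient is a unit times $c_v$, giving $c_v=0$ and hence $X\in H_W$.

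The routine parts are the explicit clock--shift identities and the standard existence of the Desarguesian spread over $\mathbb{F}_p$, both insensitive to the ground ring. The genuine obstacle, and the place where the hypotheses do real work, is ensuring that these field-theoretic arguments survive over a ring that need not be a domain: one must know that $n=p^m$ is invertible, so that the $J_v$ truly form a basis and the decomposition is a genuine direct sum, and that the nonzero commutator coefficients are \emph{units} rather than merely nonzero, so that the self-normalizing computation can conclude $c_v=0$. Both reduce to the single fact that $u-1\in R^\times$ propagates to $u^{j}-1\in R^\times$ for all $1\le j\le p-1$, which is thus the ring-theoretic heart of the proof.
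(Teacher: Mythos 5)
Your proposal is correct and is essentially the paper's own construction: the clock-and-shift matrices $J_v$ indexed by the symplectic space $\mathbb{F}_p^{2m}$ with the trace form (your direct $n\times n$ presentation is the paper's Kronecker-product matrices $\mathcal{J}_{\vec w}$ under the identification $R^{p^m}\cong (R^p)^{\otimes m}$), the same components $H_\infty,H_\alpha$ coming from the Desarguesian spread of Lagrangian lines, the same vanishing-trace computation for orthogonality, and the same unit-coefficient argument for self-normalization. The only divergences are in bookkeeping: you first establish via residue fields that $u^j-1$ ($1\le j\le p-1$) and $n=p^m$ are units, which lets you treat $H_\infty$, every $H_\alpha$, and even $p=2$ by one uniform argument and get the basis property by Fourier inversion, whereas the paper arranges for the relevant commutator coefficient to be exactly $u-1$ (via surjectivity of the field trace for $m\ge 2$, or the choice $a=-c^{-1}$ when $m=1$), proves directness by a counting argument, handles $H_1,\dots,H_{p-1}$ by conjugation with an explicit matrix $X$ (which is why it assumes $p>2$ there), and treats $n=2$ separately as Example \ref{two}.
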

\begin{proof} We first consider the case $m = 1$. For $n = 2$, see Example \ref{two}. Assume that $p > 2$.   
Let \[
D = \diag(1, u, \ldots, u^{p-1}) \text{ \ and \ } 
P =
\begin{pmatrix}
0 & 0 & \cdots & 0 & 1 \\
1 & 0 & \cdots & 0 & 0 \\
0 & 1 & \cdots & 0 & 0 \\
\vdots & \vdots & \ddots & \vdots & \vdots \\
0 & 0 & \cdots & 1 & 0
\end{pmatrix}.
\] 
Since $u^p - 1 = 0$ and $u - 1 \in R^\times$, $\tr(D) = 1 + u + u^2 + \ldots + u^{p - 1 } = 0$. Thus, $D$ and $P$ are matrices in $\mathfrak{sl}_p(R)$ and $p$ is the smallest positive integer such that $D^p = P^p = I$. For any $a, b \in \mathbb{Z}_p$, let $J_{(a, b)} = D^a P^b$. We have
\begin{align}\label{trzero}
 \tr J_{(a, b)} = 0 \Leftrightarrow (a, b) \neq (0, 0)
 \end{align}
 and
\begin{align}\label{commuPD}
P^bD^a = u^{-ab}D^aP^b.
\end{align}
The last equation implies 
\begin{align}\label{brac}
J_{(a, b)}J_{(c, d)} &= u^{-bc}J_{(a + c, b + d)} \text{ \ and \ } \\ \label{brack}
[J_{(a, b)}, J_{(c, d)}] &= (u^{-bc} - u^{- ad})J_{(a + c, b + d)}
\end{align}
for $a, b, c, d \in \mathbb{Z}_p$. For $a, k \in \mathbb{Z}_p$ with $a \neq 0$, $J_{(a, ka)}$ and $J_{(0, a)}$ are elements of $ \mathfrak{sl}_p(R)$ by (\ref{trzero}). For a fixed $k \in \mathbb{Z}_p$, it follows immediately from the definitions of $D$ and $P$ that $J_{(1,k)}, J_{(2,2k)}, \ldots, J_{(p - 1,k(p - 1))}$ are linearly independent. Construct the following free $R$-submodules:
\begin{align*}
H_k &= \langle J_{(a, ka)} | a \in \mathbb{Z}_p^\times \rangle_R, k \in \mathbb{Z}_p \text{ \ and } \\
 H_\infty &= \langle J_{(0, a)} | a \in \mathbb{Z}_p^\times \rangle_R = \langle P, P^2, \ldots, P^{p - 1} \rangle_R.
 \end{align*}
 By  (\ref{brack}), $H_\infty$ and $H_k$ are Lie subalgebras of $\mathfrak{sl}_p(R)$. 

Let 
\[
X = 
\begin{bmatrix}
1 & u^{\frac{p(p - 1)}{2}} & u^{\frac{(p - 1)(p - 2)}{2}}& \cdots & u^3 & u \\
u & 1 &  u^{\frac{p(p - 1)}{2}} & \cdots & u^6 & u^3 \\
u^3 & u & 1 & \cdots & u^{10} & u^6 \\
\vdots & \vdots  & \vdots &   \ddots & \vdots & \vdots \\
u^{\frac{(p - 1)(p - 2)}{2}} & u^{\frac{(p - 2)(p - 3)}{2}} & u^{\frac{(p - 3)(p - 4)}{2}} & \cdots & 1 & u^{\frac{p(p - 1)}{2}}\\
u^{\frac{p(p - 1)}{2}} &  u^{\frac{(p - 1)(p - 2)}{2}} & u^{\frac{(p - 2)(p - 3)}{2}} & \cdots & u & 1
\end{bmatrix}.
\] 
Since $p > 2$ and $1 - u$ is a unit, $X$ is invertible over $R$. It is straightforward to verify that $X^{-1}DPX = D$ and $X^{-1} P X = P$. Thus by (\ref{commuPD}), conjugation by the matrix $X$ shifts $H_0, H_1, \ldots, H_{p-1}$ cyclically and fixes $H_\infty$.
We show that 
\begin{align}\label{direct}
\mathfrak{sl}_p(R) = H_\infty \oplus H_0 \oplus H_1 \oplus \ldots \oplus H_{p - 1}.
\end{align}
It is clear from the construction that $H_0 \cap \sum_{j \neq 0} H_j = \{ 0 \}$. In particular, the sum is direct for $H_0$ and $H_\infty$. Thus, the sums for all $H_i$'s are also direct, and we have $H_\infty \oplus H_0 \oplus H_1 \oplus \ldots \oplus H_{p - 1}$, which is a free $R-$submodule of $\mathfrak{sl}_p(R)$. 
 But we also have 
\[
| \mathfrak{sl}_p(R) | = | H_\infty \oplus H_0 \oplus H_1 \oplus \ldots \oplus H_{p - 1} |. 
\]
Therefore, the equality (\ref{direct}) holds.

We prove that the decomposition (\ref{direct}) is pairwise orthogonal with respect to the Killing form $K(A, B) = 2p \tr(AB)$.  It is obvious that $H_\infty$ is orthogonal to all others $H_i$'s. Let $a, b \in \mathbb{Z}_p^\times, k_1, k_2 \in \mathbb{Z}_p$ with $k_1 \neq k_2$. Then 
$(a + b, k_1a + k_2b) \neq (0, 0)$
and so by (\ref{brac}), 
\begin{align*}
K(J_{(a, k_1a)}, J_{(b, k_2b)}) &= 2p \tr (J_{(a, k_1a)} J_{(b, k_2b)}) \\
& = 2p u^{-k_1ab} \tr(J_{(a + b, k_1a + k_2 b)}) \\
& = 0.
\end{align*}
Thus, $H_i$ and $H_j$ are orthogonal for all $i, j \in \mathbb{Z}_p$ and $i \neq j$. 

We now show that $H_k, (k \in \mathbb{Z}_p)$ and $H_\infty$ are abelian Cartan subalgebras. It is clear from the construction that both $H_0$ and $H_\infty$ are abelian. Moreover, $H_0$ is a Cartan subalgebra. Since $H_0, H_1, \ldots, H_{p-1}$ are conjugate, they are all abelian Cartan subalgebras. It remains to verify that $H_\infty$ is self normalizing. 
Recall that for all $k \in \mathbb{Z}_p$ and $a, b \in \mathbb{Z}_p^\times,
[J_{(a, k a)}, J_{(0,  b)}] = (1 - u^{-ab})J_{(a, ka + b)}$ is in $H_c$ for some $c \in \mathbb{Z}_p$. 
Now, let $ A \in N_{\mathfrak{sl}_p(R)}(H_\infty)$. Then  by (\ref{direct}), we can write
\[
A = \sum_{c = 1}^{p - 1} \Big(\sum_{j = 0}^{p - 1}(\alpha_{(c, j)}J_{(c, jc)}) + \beta_c J_{(0, c)} \Big),
\]
where $\alpha_{(c, j)}, \beta_c \in R$. For any basis element $J_{(0, a)}$ of $H_\infty$, we have
\begin{align*}
[A, J_{( 0, a)}] = \sum_{c = 1}^p \Big(\sum_{j = 0}^p(\alpha_{(c, j)}[J_{(c, jc)}, J_{(0, a)} ]) + \beta_c [J_{(0, c)}, J_{(0 ,a)}] \Big) \in H_\infty.
\end{align*}
This implies 
\[
\sum_{c = 1}^{p - 1} \sum_{j = 0}^{p - 1}(\alpha_{(c, j)} (1 - u^{-ac})J_{(c, jc + a)}) =
\sum_{c = 1}^{p - 1} \sum_{j = 0}^{p - 1}(\alpha_{(c, j)}[J_{(c, jc)}, J_{(0, a)} ]) \in H_\infty.
\]
This summation is also in $ \oplus_{i = 0}^{p - 1}H_i$.
Then by (\ref{direct}), it must be zero. For any $c \in \mathbb{Z}_p^\times, j \in \mathbb{Z}_p$, we can choose $a = -c^{-1}$ so the scalar $1 - u^{-ac} = 1 - u$ is a unit in $R$. So, $\alpha_{(c, j)} = 0$. Hence, $H_\infty = N_{\mathfrak{sl}_p(R)}(H_\infty)$. This completes the proof for the case $m = 1$.

Next suppose that $m \geq 2$. Let $W = \mathbb{F}_{p^m} \oplus \mathbb{F}_{p^m}$ be a $2m$-dimensional vector space over $\mathbb{F}_{p}$ equipped with a symplectic form $ \langle \cdot , \cdot \rangle : W \times W \rightarrow \mathbb{F}_p$ defined by the field trace as follows: for any elements $\vec{w} = (\alpha; \beta), \vec{w}' = (\alpha'; \beta') \in W$,
\[
\langle \vec{w}, \vec{w}'  \rangle = Tr _{\mathbb{F}_{p^m}/\mathbb{F}_{p}}(\alpha \beta' - \alpha' \beta).
\]
Then, by Corollary 3.3 of \cite{ZW02}, $W$ possesses a symplectic basis $\mathcal{B} = \{ \vec{e}_1, \ldots, \vec{e}_m, \vec{f}_1,\ldots, \vec{f}_m \}$ where $\{\vec{e}_1, \ldots, \vec{e}_m \}$ and $\{\vec{f}_1,\ldots, \vec{f}_m \}$ span the first and the second factor, respectively, such that

\[
\langle \vec{w}, \vec{w}' \rangle = \sum_{i = 1}^{m}(a_ib'_i - a'_ib_i),
\]
where $\vec{w} = \sum_{i = 1}^{m}(a_i\vec{e}_i + b_i\vec{f}_i)$ and $\vec{w}' = \sum_{i = 1}^{m}(a'_i\vec{e}_i + b'_i\vec{f}_i)$. With the basis $\mathcal{B}$, write each vector $\vec{w} \in W$ as
\[
\vec{w} = (a_1, \ldots, a_m; b_1, \ldots, b_m),
\]
and associate it with a matrix
\[
\mathcal{J}_{\vec{w}} = J_{(a_1, b_1)} \otimes J_{(a_2, b_2)} \otimes \cdots \otimes J_{(a_m, b_m)},
\]
where $\otimes$ denotes the Kronecker product of matrices, and $J_{(a_i, b_i)}$ is given as in the case $m = 1$ with a given primitive $p$th root of unity $u \in R^\times$ such that $u - 1 \in R^\times$ for all $i = 1, 2, \ldots, m$. Then the set $\{ \mathcal{J}_{\vec{w}} : 0 \neq \vec{w} \in W \}$ forms a basis of $\mathfrak{sl}_{p^m}(R)$ as a free $R$-module of rank $p^m + 1$. By the properties of Kronecker product, we have the following:
\begin{align}\label{brac1}
\mathcal{J}_{\vec{w}}\mathcal{J}_{\vec{w}'} &= u^{-\mathfrak{B}(\vec{w}, \vec{w}')}\mathcal{J}_{\vec{w} + \vec{w}'} \text{ \ and \ } \\ \label{brac1.5}
 [\mathcal{J}_{\vec{w}}, \mathcal{J}_{\vec{w}'}] &= (u^{-\mathfrak{B}(\vec{w}, \vec{w}')} -u^{-\mathfrak{B}(\vec{w}', \vec{w})})\mathcal{J}_{\vec{w} + \vec{w}'} \\
 &= u^{-\mathfrak{B}(\vec{w}', \vec{w})}(u^{\langle \vec{w}, \vec{w}' \rangle} - 1) \mathcal{J}_{\vec{w} + \vec{w}'}, \nonumber
\end{align} 
where 
\[
\mathfrak{B}(\vec{w}, \vec{w}') = \sum_{i = 1}^m a'_i b_i
\]
for all $\vec{w} = (a_1, \ldots, a_m; b_1, \ldots, b_m), \vec{w}' = (a'_1, \ldots, a'_m; b'_1, \ldots, b'_m) \in W$. 

Write $\vec{w} = (\alpha; \beta) \in W$, where $\alpha = (a_1, a_2, \ldots, a_m)$ and $\beta = (b_1, b_2, \ldots, b_m)$. Define 
\begin{align*}
 H_\infty =  \langle \mathcal{J}_{( 0; \lambda )} | \lambda \in \mathbb{F}^\times_{p^m} \rangle_R \text{ \ and \ } 
H_\alpha = \langle \mathcal{J}_{( \lambda; \alpha \lambda )} | \lambda \in \mathbb{F}^\times_{p^m} \rangle_R, 
\end{align*}
 where $\alpha \in \mathbb{F}_{p^m}$. Since the $\mathcal{J}_{\vec{w}}$'s are basis elements, we have  
 \begin{align}\label{direct2}
 \mathfrak{sl}_{p^m}(R) = H_\infty \oplus (\oplus_{\alpha \in \mathbb{F}_{p^m}} H_\alpha)
 \end{align}
 
 We show that all component $H_i$'s are pairwise orthogonal abelian Cartan subalgebras. It is clear that $\langle (\lambda; \alpha \lambda ), (\lambda';  \alpha \lambda') \rangle = \braket{(0; \lambda), (0; \lambda')} = 0$, so by (\ref{brac1.5}), all $H_\alpha$ and $H_\infty$ are abelian. To see that they are pairwise orthogonal, note that if $(\gamma; \delta) \neq (-\alpha; -\beta)$, then $\tr (\mathcal{J}_{(\alpha; \beta)}\mathcal{J}_{(\gamma; \delta)}) = 0$. Indeed, if $\lambda = (a_1, \ldots, a_m), \beta = (b_1, \ldots, b_m), \gamma = (a'_1, \ldots, a'_m), \delta = (b'_1, \ldots, b'_m)$ and $a_i \neq -a'_i$ for some $i \in \{1, \ldots, m\}$, then $a_i + a'_i \neq 0$ and $\tr J_{(a_i + a'_i, b_i + b'_i)} = 0$ (as in the case $m = 1$). By (\ref{brac1}) and the trace property of Kronecker product,
 \begin{align*} 
 \tr (\mathcal{J}_{(\alpha; \beta)}\mathcal{J}_{(\gamma; \delta)}) &= u^{-\mathfrak{B}((\alpha;\beta), (\gamma; \delta))} \tr (\mathcal{J}_{(a_1+a_1', \ldots, a_m + a_m' ; b_1 + b_1', \ldots, b_m + b_m')}) \\
  &= u^{-\mathfrak{B}((\alpha;\beta), (\gamma; \delta))} \tr(\otimes_{j = 1}^m J_{(a_j + a_j', b_j + b_j')}) \\
  &= u^{-\mathfrak{B}((\alpha;\beta), (\gamma; \delta))} \prod_{j=1}^{m}\tr( J_{(a_j + a_j', b_j + b_j')}) \\
  & = 0.
   \end{align*}
Thus they are pairwise orthogonal.
It remains to show that all $H_\alpha$'s and $H_\infty$ are their own normalizers. We first show that
for $\alpha \neq \alpha' \in \mathbb{F}_{p^m}$ and $\lambda' \in \mathbb{F}_{p^m}^\times$,
\begin{enumerate}[(i)]
\item there is an $\lambda \in \mathbb{F}_{p^m}^\times$ such that $\langle (\lambda; \alpha \lambda), (\lambda'; \alpha' \lambda')\rangle = 1$ and
\item there is an $\lambda \in \mathbb{F}_{p^m}^\times$ such that $\langle (\lambda; \alpha \lambda), (0; \lambda')\rangle = 1$.
\end{enumerate}	
Since the field trace is surjective (see Exercise V.7.2 of \cite{Hung74}), there exists $\gamma \in \mathbb{F}_{p^m}$ such that $Tr _{\mathbb{F}_{p^m}/\mathbb{F}_{p}}(\gamma) = 1$. Thus, we can choose $\lambda = \gamma (\lambda' (\alpha' - \alpha))^{-1}$ for (i) and choose $\lambda = (\lambda')^{-1}$ for (ii). 
 Now, for any $\alpha \in \mathbb{F}_{p^m} $ and $ A \in N_{\mathfrak{sl}_{p^m}(R)}(H_\alpha)$,
 \[
 A = \sum_{\lambda' \in \mathbb{F}_q^\times } \Big(\sum_{\alpha' \in \mathbb{F}_q}a_{(\lambda', \alpha')}\mathcal{J}_{(\lambda', \alpha' \lambda')} + b_{\lambda'} \mathcal{J}_{(0, \lambda')} \Big).
 \]
 For any basis element $\mathcal{J}_{(\lambda, \alpha \lambda)} \in H_\alpha$, we have
  \begin{align}\label{normalbigJ}
 \sum_{\lambda' \in \mathbb{F}_q^\times } \Big(\sum_{\substack{\alpha' \in \mathbb{F}_q \\ \alpha' \neq \alpha}}a_{(\lambda', \alpha')}[\mathcal{J}_{(\lambda', \alpha' \lambda')}, \mathcal{J}_{(\lambda, \alpha \lambda)}] + b_{\lambda'} [\mathcal{J}_{(0, \lambda')}, \mathcal{J}_{(\lambda, \alpha \lambda)} ] \Big) \in H_\alpha.
 \end{align}
 Note that 
 \begin{align*}
 [\mathcal{J}_{(\lambda', \alpha' \lambda')}, \mathcal{J}_{(\lambda, \alpha \lambda)}] &= u^{-\mathfrak{B}((\lambda, \alpha \lambda), (\lambda', \alpha' \lambda'))}(u^{\langle (\lambda', \alpha' \lambda'), (\lambda, \alpha \lambda) \rangle} - 1) \mathcal{J}_{(\lambda'+ \lambda, \alpha' \lambda' + \alpha \lambda)}, \\
 [\mathcal{J}_{(0, \lambda')}, \mathcal{J}_{(\lambda, \alpha \lambda)} ] &= u^{-\mathfrak{B}((\lambda, \alpha \lambda), (0, \lambda'))}(u^{\langle (0, \lambda'), (\lambda, \alpha \lambda) \rangle} - 1) \mathcal{J}_{(\lambda, \lambda'+  \alpha \lambda)}.
 \end{align*}
 The summation in (\ref{normalbigJ})  is also in $\sum_{i \neq \alpha} H_i$.
For any $(\lambda', \alpha')$,  by (i), we can choose a suitable $\lambda$ for which $u^{\langle (\lambda'; \alpha' \lambda'), (\lambda; \alpha \lambda) \rangle} - 1 = u - 1$ is a unit in $R$. This implies  $a_{(\lambda', \alpha')}$ is zero because the sums in (\ref{direct2}) are direct. By (ii), we can show that any $b_{\lambda'}$ is also zero. Thus, $A \in H_\alpha$ and so, $N_{\mathfrak{sl}_{p^m}(R)}(H_k) = H_k$. By using similar arguments, we can show $N_{\mathfrak{sl}_{p^m}(R)}(H_\infty) = H_\infty$.
\end{proof}

We note that Theorem \ref{commu} relies on the existence of a primitive $p$th root of unity $u$ such that $u - 1$ is a unit.
 If $R$ is local, i.e. it has the unique maximal ideal, we can give a sufficient condition for the existence of such a primitive root of unity.

\begin{thm}\label{local}
Let $R$ be a finite local ring with the maximal ideal $M$ and the residue field $k = R/M$. For a prime power $n = p^m$, if $p | |k^\times|$, then $\mathfrak{sl}_n(R)$ has an ODAC 
\[
\mathfrak{sl}_n(R) = H_\infty \oplus H_0 \oplus H_1 \oplus \ldots \oplus H_{n - 1}.
\]
\end{thm}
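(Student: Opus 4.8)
The plan is to reduce everything to Theorem \ref{commu}. That theorem already delivers an ODAC of $\mathfrak{sl}_n(R)$ the moment one has a primitive $p$th root of unity $u \in R^\times$ with $u - 1 \in R^\times$, so the entire task becomes an existence statement about units in the finite local ring $R$. Thus I would state at the outset that it suffices to produce such a $u$ and then invoke Theorem \ref{commu}.

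First I would record the arithmetic consequence of the hypothesis. Since $k$ is a finite field its characteristic is some prime $\ell$ and $|k^\times| = |k| - 1$. If $\ell = p$, then $|k|$ is a power of $p$, whence $|k^\times| \equiv -1 \pmod{p}$ and $p \nmid |k^\times|$. Therefore the assumption $p \mid |k^\times|$ forces $\ell \neq p$. This is the pivotal observation; everything else follows from the resulting coprimality.

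Next I would locate the root of unity inside the finite abelian group $R^\times$. Because $R$ is finite and local, $M$ is nilpotent and each quotient $M^i/M^{i+1}$ is a $k$-vector space, so $|1 + M| = |M|$ is a power of $\ell$. From the exact sequence $1 \to 1 + M \to R^\times \to k^\times \to 1$ (the reduction map being surjective since every element outside $M$ is a unit) I get $|R^\times| = |1 + M|\,|k^\times|$, so $p \mid |R^\times|$. As $R^\times$ is abelian, Cauchy's theorem provides an element $u \in R^\times$ of exact order $p$, i.e.\ a primitive $p$th root of unity. Finally, to check $u - 1 \in R^\times$, I would use that $R$ is local, so $u - 1$ is a unit iff $u \notin 1 + M$, equivalently iff the image $\bar u \in k^\times$ is nontrivial; since $u$ has order $p$ while $1 + M$ has order a power of $\ell \neq p$, the element $u$ cannot lie in $1 + M$, hence $\bar u \neq 1$ and $u - 1 \in R^\times$. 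Applying Theorem \ref{commu} with this $u$ then yields the asserted decomposition.

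The step I would flag as the genuine crux, rather than routine bookkeeping, is the observation in the second paragraph that $p \mid |k^\times|$ forces the residue characteristic to differ from $p$: that single coprimality statement is precisely what makes both the production of $u$ via Cauchy's theorem and the unit property of $u - 1$ go through, and without it the argument collapses.
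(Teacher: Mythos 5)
Your proof is correct, and its skeleton matches the paper's: both reduce Theorem \ref{local} to Theorem \ref{commu} by producing a primitive $p$th root of unity $u \in R^\times$ with $u - 1 \in R^\times$, and both obtain $u$ from $p \mid |R^\times|$ via Cauchy's theorem (the paper cites the decomposition $R^\times \cong (1+M) \times k^\times$ from McDonald, while you use the equivalent exact sequence $1 \to 1+M \to R^\times \to k^\times \to 1$ --- an immaterial difference). Where you genuinely diverge is in verifying the crux condition $u - 1 \in R^\times$. The paper argues by an explicit nilpotency computation: it first notes $p \neq \cha$ of the residue field so that $p \cdot 1 \in R^\times$, then supposes $u = 1 + x$ with $0 \neq x \in M$, expands $1 = u^p = 1 + px + (\text{higher powers of } x)$, and multiplies by $x^{d-2}$ for the minimal $d$ with $x^d = 0$ to deduce $x^{d-1} = 0$, a contradiction. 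Your argument is structural instead: since each $M^i/M^{i+1}$ is a $k$-vector space, $|1+M| = |M|$ is a power of the residue characteristic $\ell$, and your pivotal observation that $p \mid |k^\times|$ forces $\ell \neq p$ (which the paper also records, in the form ``$p$ is relatively prime to the characteristic of $R$'') lets you conclude by Lagrange that the order-$p$ element $u$ cannot lie in $1+M$, whence $\bar u \neq 1$ in $k$ and $u - 1 \notin M$ is a unit because $R$ is local. Your route is shorter, avoids the binomial manipulation entirely, and does not even need the invertibility of $p$ in $R$; the paper's computation is more hands-on but reaches the same conclusion. You correctly identified the single coprimality statement $p \neq \ell$ as the step on which both the Cauchy argument and the unit property of $u-1$ turn.
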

\begin{proof}
By Theorem XVIII.2 of \cite{M74},
\[
R^\times \cong (1 + M) \times k^\times.
\] 
Thus $p | |R^\times|$ too, so by Cauchy's theorem for finite groups, there exists $u \in R^\times$ of order $p$. Moreover, it follows that $p$ is relatively prime to the characteristic of $R$. Thus, $p \cdot 1$ is a unit in $R$. Next,
we show that $u - 1$ is also a unit in $R$.  Suppose that $u - 1$ is not a unit. Then $u = 1 + x$ for some nonzero $x \in M$. Then $1 = u^p = 1 + px + ( \text {higher power terms of \ } x )$, so $px + (\text{higher power terms of \ } x) = 0$. Let $d > 1$ be the smallest integer such that $x^d = 0$  and multiply the equation by $x^{d - 2}$, we have $px^{d - 1} = 0$, so $x^{d - 1} = 0$ since $p$ is a unit in $R$. A contradiction to the choice of $d$.
\end{proof}

Note that a finite field $\mathbb{F}_q$ is a finite local ring with the maximal ideal $\{ 0 \}$ and $|\mathbb{F}_q^\times| = q-1$, so by the above theorem, we have:

\begin{cor}\label{field}
Let $q$ be a prime power and let $\mathbb{F}_q$ be a finite field of $q$ elements. For another prime power $n = p^m$, if $p | ( q - 1 )$, then $\mathfrak{sl}_n(\mathbb{F}_q)$ has an ODAC
\[
\mathfrak{sl}_n(\mathbb{F}_q) = H_\infty \oplus H_0 \oplus H_1 \oplus \ldots \oplus H_{n - 1}.
\]
\end{cor}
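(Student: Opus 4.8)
The plan is to recognize this statement as an immediate specialization of Theorem \ref{local}, so the entire task reduces to checking that a finite field satisfies the hypotheses of that theorem with the correct invariants. First I would observe that any finite field $\mathbb{F}_q$ is in particular a finite local ring: since a field has no proper nonzero ideals, its unique maximal ideal is $M = \{0\}$, and the residue field is $k = R/M = \mathbb{F}_q/\{0\} \cong \mathbb{F}_q$. Hence $k^\times = \mathbb{F}_q^\times$, which is cyclic of order $q - 1$.

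Next I would translate the divisibility hypothesis. The assumption $p \mid (q-1)$ is literally the statement that $p \mid |k^\times|$, which is exactly the condition required to invoke Theorem \ref{local}. Applying that theorem with $R = \mathbb{F}_q$ then produces a primitive $p$th root of unity $u \in \mathbb{F}_q^\times$ with $u - 1 \in \mathbb{F}_q^\times$, and feeds directly into the construction of Theorem \ref{commu}, yielding the desired decomposition
\[
\mathfrak{sl}_n(\mathbb{F}_q) = H_\infty \oplus H_0 \oplus H_1 \oplus \ldots \oplus H_{n-1}
\]
into pairwise orthogonal abelian Cartan subalgebras.

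I do not expect any genuine obstacle here: all of the substantive work has already been carried out in Theorems \ref{commu} and \ref{local}, and the only content of this corollary is the bookkeeping identification $M = \{0\}$, $k = \mathbb{F}_q$, $|k^\times| = q - 1$. (One might note for completeness that $p \mid (q-1)$ forces $p \neq \cha(\mathbb{F}_q)$, so $p \cdot 1$ is automatically a unit, consistent with the requirements of the earlier proofs; but this is already subsumed by Theorem \ref{local}.) The proof is therefore a one-line appeal to Theorem \ref{local} once the local-ring structure of $\mathbb{F}_q$ is made explicit.
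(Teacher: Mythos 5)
Your proposal is correct and matches the paper's own argument exactly: the paper derives Corollary \ref{field} from Theorem \ref{local} by observing that $\mathbb{F}_q$ is a finite local ring with maximal ideal $\{0\}$, residue field $k \cong \mathbb{F}_q$, and $|k^\times| = q-1$, so that $p \mid (q-1)$ is precisely the hypothesis $p \mid |k^\times|$. Nothing in your write-up deviates from or adds a gap to this reasoning.
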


For a finite commutative ring $R$ with $1$,
 $R = R_1 \times R_2 \times \cdots \times R_t$ is a finite direct product of finite local rings (see Theorem VI.2 of \cite{M74}). If each of the local rings in the decomposition of $R$ satisfies the condition in Theorem \ref{local}, then  $\mathfrak{sl}_n(R)$ has an ODAC.

\begin{thm}\label{last}
Let $R = R_1 \times R_2 \times \cdots \times R_t$ be a finite direct product of finite local rings and let $k_i$ be the residue field of $R_i$ for all $i \in \{ 1, 2, \ldots, t\}$. For a prime power $n = p^m$, if $p | |k_i^\times|$ for all $i \in \{1, 2, \ldots, t\}$, then $\mathfrak{sl}_n(R)$ has an ODAC
\[
\mathfrak{sl}_n(R) = H_\infty \oplus H_0 \oplus H_1 \oplus \ldots \oplus H_{n - 1}.
\]
\end{thm}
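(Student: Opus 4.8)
The plan is to reduce Theorem \ref{last} directly to Theorem \ref{commu} by manufacturing, from the local factors, a single primitive $p$th root of unity $u \in R^\times$ with $u - 1 \in R^\times$. Once such a $u$ is in hand, Theorem \ref{commu} applies verbatim to the finite commutative ring $R$ and produces the asserted decomposition together with its stated labelling, so no new construction is needed.

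First I would recall that for a finite direct product of rings the unit group factors as $R^\times \cong R_1^\times \times \cdots \times R_t^\times$. Fix an index $i$. Since $R_i^\times \cong (1 + M_i) \times k_i^\times$ by Theorem XVIII.2 of \cite{M74} and $p \mid |k_i^\times|$, we get $p \mid |R_i^\times|$, so Cauchy's theorem furnishes an element $u_i \in R_i^\times$ of order exactly $p$. The argument already carried out in the proof of Theorem \ref{local} shows moreover that $u_i - 1 \in R_i^\times$: the divisibility $p \mid |k_i^\times|$ forces $p$ to be coprime to $\cha(R_i)$, hence $p \cdot 1$ is a unit in $R_i$, and then writing a hypothetical $u_i = 1 + x$ with $0 \neq x \in M_i$ and expanding $1 = u_i^p$ contradicts the minimality of the nilpotency index of $x$.

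Next I would assemble $u = (u_1, \ldots, u_t) \in R^\times$. Then $u^p = (u_1^p, \ldots, u_t^p) = 1$, and since $p$ is prime and each coordinate satisfies $u_i \neq 1$ we have $u \neq 1$, so the order of $u$ is exactly $p$; that is, $u$ is a primitive $p$th root of unity in $R$. Likewise $u - 1 = (u_1 - 1, \ldots, u_t - 1)$ is a unit coordinatewise, hence $u - 1 \in R^\times$. As $R$ is a finite commutative ring with $1$ and $n = p^m$ is a prime power, Theorem \ref{commu} applied to the pair $(R, u)$ then yields the ODAC $\mathfrak{sl}_n(R) = H_\infty \oplus H_0 \oplus \cdots \oplus H_{n-1}$.

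There is no substantive obstacle here, since the genuine content is already absorbed into Theorems \ref{commu} and \ref{local}; the only points requiring care are the verifications that the assembled $u$ has order exactly $p$ (rather than merely $u^p = 1$) and that each $u_i - 1$ is a unit, and both are routine, the former because $p$ is prime and no $u_i$ is trivial. I note in passing an equivalent but less economical route: identifying $\mathfrak{sl}_n(R) \cong \mathfrak{sl}_n(R_1) \times \cdots \times \mathfrak{sl}_n(R_t)$ as Lie algebras over $R$ (the trace is computed coordinatewise), applying Theorem \ref{local} to each factor, and taking coordinatewise products of the resulting summands. This works because nilpotency, self-normalization, abelianness, and $K$-orthogonality all factor through a finite direct product, but it requires invoking these product facts explicitly, whereas the construction of $u$ lets us call Theorem \ref{commu} a single time.
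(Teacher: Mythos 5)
Your proposal is correct and matches the paper's proof essentially verbatim: the paper likewise extracts, from the argument in Theorem \ref{local}, a primitive $p$th root of unity $u_i \in R_i^\times$ with $u_i - 1_{R_i} \in R_i^\times$ for each factor, assembles $u = (u_1, \ldots, u_t)$, uses $R^\times = R_1^\times \times \cdots \times R_t^\times$ to conclude $u - 1 \in R^\times$, and then invokes Theorem \ref{commu} once. Your extra care in checking that $u$ has order exactly $p$ is a point the paper passes over silently, but the route is the same.
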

\begin{proof}
 By the proof of Theorem \ref{local}, there exists a primitive $p$th root of unity $u_i \in R_i^\times$ such that $u_i - 1_{R_i} \in R_i^\times$ for all $i$. Then $u = (u_1, u_2, \ldots, u_t)$ is a primitive $p$th root of unity in $R$ such that 
\begin{align*}
u - 1 &= (u_1, u_2, \ldots, u_t) - (1_{R_1}, 1_{R_2}, \ldots, 1_{R_t}) \\
&= (u_1 - 1_{R_1}, u_2 - 1_{R_2}, \ldots, u_t - 1_{R_t}) \in R^\times
\end{align*}
because $R_1^\times \times R_2^\times \times \ldots \times R_t^\times = R^\times$ (Theorem XVIII.1 of \cite{M74}). Thus Theorem \ref{commu} implies that $\mathfrak{sl}_n(R)$ admits an ODAC.
\end{proof}

By the above theorem, we have the following examples.

\begin{ex} 
 Let $q$ be an odd positive integer and $m$ a positive integer.  Then all prime factors of $q$ are odd and $\mathfrak{sl}_{2^m}(\mathbb{Z}_q)$ has an ODAC.	
\end{ex}	

\begin{ex}
For any  positive integers $s, t$ and $m$,  $\mathfrak{sl}_{3^m}(\mathbb{Z}_{7^s 31^t})$ has an ODAC.
\end{ex}	
 
\section{Concluding remarks}\label{sum}

In Theorems \ref{local}, \ref{last} and Corollary \ref{field}, we provided some sufficient conditions for the existence of an ODAC of $\mathfrak{sl}_n(R)$. These conditions are from the structure of the ring $R$ which can be checked readily. One may ask for what $n$ and $R$, $\mathfrak{sl}_n(R)$ does not have an ODAC. 
We can show the nonexistence of ODAC for a collection of $n$ and $R$. For instance, if $R$ has characteristic $2$, then $\mathfrak{sl}_2(R)$ contains $I_2$. If $A$ is a subalgebra of $\mathfrak{sl}_2(R)$ which is its own normalizer, then $I_2 \in A$. Since
\[
\left[ 
\begin{pmatrix}
0 & 1 \\
0 & 0
\end{pmatrix},
 \begin{pmatrix}
 a & b \\
 c & a
 \end{pmatrix}
 \right],
 \left[ 
 \begin{pmatrix}
 0 & 0 \\
 1 & 0
 \end{pmatrix},
 \begin{pmatrix}
 a & b \\
 c & a
 \end{pmatrix}
 \right] \in A,
\]
we see that $A = \mathfrak{sl}_2(R)$. Therefore, $\mathfrak{sl}_2(R)$ has no proper abelian Cartan subalgebra. Since $\mathfrak{sl}_2(R)$ is not abelian, $\mathfrak{sl}_2(R)$ does not admit an ODAC. In general, if $n $ is a positive multiple of the characteristic of $R$, then the identity matrix $I_n$ is in $ \mathfrak{sl}_n(R)$ and is contained in every abelian Cartan subalgebras. Therefore, each pair of abelian Cartan subalgebras has a non-trivial intersection and thus $\mathfrak{sl}_n(R)$ does not have an ODAC since  $[\mathfrak{sl}_n(R), \mathfrak{sl}_n(R)] = \mathfrak{sl}_n(R)$ implies $\mathfrak{sl}_n(R)$ is nonabelian (i.e. the trivial decomposition is not an ODAC). In particular, if $R = \mathbb{Z}_2, \mathbb{Z}_3, \mathbb{Z}_6$,
  $\mathfrak{sl}_6(R)$ does not possess the desired decomposition. 

We give an example of an algebra that does not have an ODAC, when all the conditions of Theorem \ref{commu} hold except the condition that $u - 1$ being a unit.
Consider $\mathfrak{sl}_3(\mathbb{Z}_9)$. There are two primitive cube roots of unity $4$ and $7$ in $\mathbb{Z}_9$, but $3$ and $6$ are nonunits. Moreover, $3I_3$ is contained in $\mathfrak{sl}_3(\mathbb{Z}_9)$ and also in every abelian Cartan subalgebras. So as in the previous paragraph, we see that $\mathfrak{sl}_3(\mathbb{Z}_9)$ does not have an ODAC.

There is also the problem of uniqueness of ODAC.
It is known that the OD of $\mathfrak{sl}_n(\mathbb{C})$ for all $ n \leq 5$ is unique up to conjugacy \cite{KK83}.  Consider the case $\mathfrak{sl}_2(R)$. If $R = \mathbb{C}$, all Cartan subalgebras are conjugate  \cite{H72}. Thus, we can assume in an OD of $\mathfrak{sl}_2(\mathbb{C})$, one of the Cartan subalgebra consisting of diagonal matrices, so up to conjugacy, an OD looks as follows
\[
\bigg \langle 
\begin{pmatrix}
1 & 0 \\
0 & -1
\end{pmatrix}
\bigg \rangle_\mathbb{C}
\oplus
\bigg \langle 
\begin{pmatrix}
0 & 1 \\
a & 0
\end{pmatrix}
\bigg \rangle_\mathbb{C}
\oplus
\bigg \langle 
\begin{pmatrix}
0 & 1 \\
b & 0
\end{pmatrix}
\bigg \rangle_\mathbb{C}
\] 
for some $a, b \neq 0$. By using the orthogonality with respect to the Killing form, we derive $b = -a$. Note that the conjugation by \[
\begin{pmatrix}
\sqrt{a} & 0 \\
0 & 1
\end{pmatrix}
\]
 stabilizes the first component and maps 
 \[
\begin{pmatrix}
0 & 1 \\
a & 0
\end{pmatrix}
 \longmapsto  \sqrt{a}
\begin{pmatrix}
0 & 1 \\
1 & 0
\end{pmatrix}.
\]
 Therefore, $\mathfrak{sl}_2(\mathbb{C})$ has a unique OD up to conjugacy. For a comparison, consider $R = \mathbb{F}_{p^m}$, where $p \neq 2$. It follows from Lemma \ref{H} and the orthogonality that any classical ODAC of $\mathfrak{sl}_2(\mathbb{F}_{p^m})$ is conjugate to 
\begin{align}\label{formsl2}
\bigg \langle 
\begin{pmatrix}
1 & 0 \\
0 & -1
\end{pmatrix}
\bigg \rangle_{\mathbb{F}_{p^m}}
\oplus
\bigg \langle 
\begin{pmatrix}
0 & 1 \\
a & 0
\end{pmatrix}
\bigg \rangle_{\mathbb{F}_{p^m}}
\oplus
\bigg \langle 
\begin{pmatrix}
0 & 1 \\
-a & 0
\end{pmatrix}
\bigg \rangle_{\mathbb{F}_{p^m}}
\end{align}
for some $a \neq 0$ as well. However, the element $a \in \mathbb{F}_{p^m}$ may not have a square root in $\mathbb{F}_{p^m}$. Consequently, we may not have an automorphism of $\Aut (\mathfrak{sl}_2(\mathbb{F}_{p^m}))$ mapping this decompostion to 
\[
\bigg \langle 
\begin{pmatrix}
1 & 0 \\
0 & -1
\end{pmatrix}
\bigg \rangle_{\mathbb{F}_{p^m}}
\oplus
\bigg \langle 
\begin{pmatrix}
0 & 1 \\
1 & 0
\end{pmatrix}
\bigg \rangle_{\mathbb{F}_{p^m}}
\oplus
\bigg \langle 
\begin{pmatrix}
0 & 1 \\
-1 & 0
\end{pmatrix}
\bigg \rangle_{\mathbb{F}_{p^m}}
\]
as in the complex number case. Similarly, over other fields where not every element has a square root, such as certain finite extensions of $\mathbb{Q}$, the decomposition in (\ref{formsl2}) may not be unique up to conjugation. 

Exploring the possible applications of ODAC over commutative rings requires further attention.
 The OD problem for other algebras has also been studied  \cite{I07}. 
We plan to discuss some of these topics in another paper.

\section*{Acknowledgment}
The authors would like to thank the referee for the constructive comments that lead to the improvement of this paper.
The second author acknowledges the support of a grant from the Simons Foundation ($\# 416937$).

\section{Appendix}
\noindent
Sage code: \\
R.$<$a, b, c, d, x, y, z, u, v, w $>$ = ZZ[] \\
A = matrix([[0, 0, 0], [a, 0, b], [c, d, 0]]) \\
B = matrix([[0, x, y], [u, 0, z], [v, w, 0]]) \\
C = A*B - B*A \\
detkilling = (A*A).trace()*(B*B).trace() - ((A*B).trace())\^ \ 2 \\
J = ideal (list (C[0]) + list (C[1]) + list (C[2]) + [a*b*c*d]) \\
detkilling in J \\

\noindent
Magma code: \\
 P$<$a,b,c,d,x, y,z,u,v,w$>$ := PolynomialRing(IntegerRing(),10); \\
A := Matrix(3, [0,0,0, a,0,b, c,d,0]);\\
B := Matrix(3, [0,x,y, u,0,z, v,w,0]);\\
C := A*B - B*A ;\\
detkilling := Trace(A*A)*Trace(B*B) - Trace(A*B)\^ \ 2;\\
S :=  \{ C[i,j]: i,j in [1, 2, 3] \} join \{ a*b*c*d \};\\
J := Ideal(S);\\
detKilling in J;\\

\begin{thebibliography}{99}
	
	
	\bibitem{BZ16}
	 Bondal A, Zhdanovskiy I. Orthogonal pairs and mutually unbiased bases. 
	 J. Math. Sci. (N.Y.). 2016;216(1):23--40.
	
	\bibitem{BS07}
	Boykin PO, Sitharam M, Tiep PH, et al. Mutually unbiased bases and orthogonal decompositions of Lie algebras. Quantum Inf. Comput. 2007;7:371--382.
	
	
	
	\bibitem{DE10}
	Durt T, Englert BG, Bengtsson I, et al.
	On mutually unbiased bases. 
	Int. J. Quantum Inform. 2010;8:535--640.
	
	\bibitem{H72}
	Humphreys JE. Introduction to Lie Algebras and Representation Theory. Vol. 9, Graduate texts in mathematics. New York (NY): Springer-Verlag; 1972.
	
	\bibitem{Hung74}
	Hungerford TW. Algebra. Vol. 73, Graduate texts in mathematics. New York (NY): Springer; 1974. 
	
	\bibitem{I07}
	Ivanov DN. Homogeneous commutative orthogonal decompositions of semisimple algebras. Uspekhi Mat. Nauk. 2007;62(6):173--174. Russian.
	
	\bibitem{KK81}
	Kostrikin AI, Kostrikin IA,  Ufnarovskii VA. Orthogonal decompositions of simple Lie algebras (type $A_n$). Trudy Mat. Inst. Steklov. 1981;158:105--120. Russian.
	
	\bibitem{KK83}
	Kostrikin AI, Kostrikin IA,  Ufnarovskii VA.
	On the uniqueness of orthogonal decompositions of Lie algebras of type $A_n$ and $C_n$. Mat. Issled. 1983;74:80--105. Russian.
	
	\bibitem{KT94}
	Kostrikin AI, Tiep PH. Orthogonal Decompositions and Integral Lattices. Berlin, New York: Walter de Gruyter; 1994.
	
	\bibitem{M74}
	McDonald BR.
	Finite Rings with Identity. New York:
	Marcel Dekker; 1974. 
	
	\bibitem{R09}
	Ruskai MB.
	Some connections between frames, mutually unbiased bases, and POVM's in quantum information theory.
	Acta Appl. Math. 2009;108(3):709--719.
	
	\bibitem{S67}
	Seligman GB.
	Modular Lie Algebras. Vol. 40, Ergebnisse der Mathematik und ihrer Grenzgebiete. New York (NY): 
	Springer-Verlag; 1967.
	
	\bibitem{T76}
	Thompson JG.
	A conjugacy theorem for $E_8$. 
	J. Algebra. 1976;38(2):525--530.
	
	\bibitem{ZW02}
	Wan Z. Geometry of Classical Groups over Finite Fields,
	 2nd Edition. Beijing, New York: Science Press; 2002.		
\end{thebibliography}
\end{document}